\newtheorem{theorem}{Theorem}
\newtheorem{A}{A}
\newtheorem{proposition}{Proposition}
\newtheorem{lemma}{Lemma}
\theoremstyle{remark}
\theoremstyle{definition}
\newtheorem{definition}{Definition}
\title{Poncelet and the arquimedean twins}
\author{Liliana Gabriela  Gheorghe}
\begin{document}
\maketitle

\textbf{Abstract.}
\small{We give a sharp  construction for twins in arbelos, based on polar reciprocity. In the process, new circles displaying arquimedean afinities came into scene.
}

  \begin{figure}
\centering
\includegraphics[trim=50 100 50 120,clip,width=0.9\textwidth]{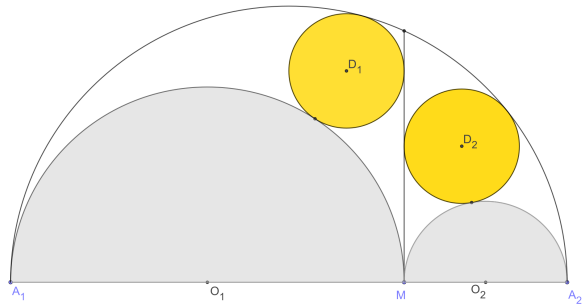}
\caption{Twins in arbelos are solutions of two distinct Apollonius' problems: but why are they arquimedean?}
\label{fig:isca1}
\end{figure}

\section{Introduction}

Arbelos   is the greek for 'shoemaker's knife', the shape  bounded by three pairwise tangent semicircles with diameters lying on the same line. As a geometric object, it was first studied by Arquimedes
 in his Book of Lemmas, hence  dates back more than 2200 years ago. If we draw the common (internal)  tangent to the arbelos circles  then the
two circles that
 tangent this linee,  the arbelos outer circle and   one of the arbelos' circles are called "the twins"; see figure \ref{fig:isca}.
Arquimedes had already spotted them 
and proved that their radius is half the harmonic mean of the two arbelos i-circles.
As a tribute, 
circles in  arbelos, congruent with the twins are called arquimedean;  chasing  arquimedean circles  in arbelos 
was is a  constant theme ever-since.

Perhaps
the most humble arquimedean circle is
those whose diameter is the parallel through $M$ at the bases of the (rectangular) trapeze of basis $R_1,R_2$ and altitude $R_1+R_2$;
see figure \ref{fig:isca2}; in fact the proof of this fact  uses elementary proprieties in trapezius. Longer, yet 
straightforward computation   confirms that
the tangents from $O_1$ and $O_2$
to this circle meet precisely at the center of the arbelos i-circle. This circle was spotted in [B] (somehow backwardly that presented here), but we presume Arquimedes already knew about it.
\subsection*{Main results}
Rather than chasing arquimedean circles,
here we are interested in explain  why the twins are identical and how to draw them. We  foreseen the (classic) twins as solutions of two degenerated Apollonius' problem, and we find their centers as intersection of special conics. 

\vspace{0.3cm}

{\bf{Keywords:}} {arbelos, Apollonius's problem, circle inversion, poles, polar duals.}

{\bf 2020 Mathematics Subject Classification: 51A05, 51A30, 51M15.}

To  perform their intersection,  we use polar reciprocity, the method tailored by Poncelet ([P]) in order to proof his Porism.
The proof itself led  to a sharp geometric construction for their centers, which is our main result; see theorem \ref{thm:construction}.
The method also explains why  the radii of the twins is the same.
We then attach another  pair of circles, which has twice the radius of the initial one, and study  a new tern  of related circles.
One of the circles of the tern is  arquimedean, and was first spotted by Scotch in [S];  the other two are new and  verify an arquimedean-type relation which we prove in theorem \ref{thm:cousins}.

\subsection*{Related work}
The literature on arbelos is so rich and the references are so abundant, that we   limit ourselves to  a small sample that show that interest for this problem is still vivid.
In [B] a famous construction for the i-circle in arbelos, much simpler than the one
from Archimedes’ proof, due to the existence of a third arquimedean circle;
[W] also provides a simple construction of the i-circle in (classic) arbelos; in
[D-L] the authors
study inversions mapping that switch  two given circles and apply it to arbelos;
[O] studies a generalization of arbelos;
 [O-W] study twin circles in skewed arbelos. [We] is a handy source that collect many of these facts and related bibliography.

 \begin{figure}
\centering
\includegraphics[trim=20 0 20 0,clip,width=0.9\textwidth]{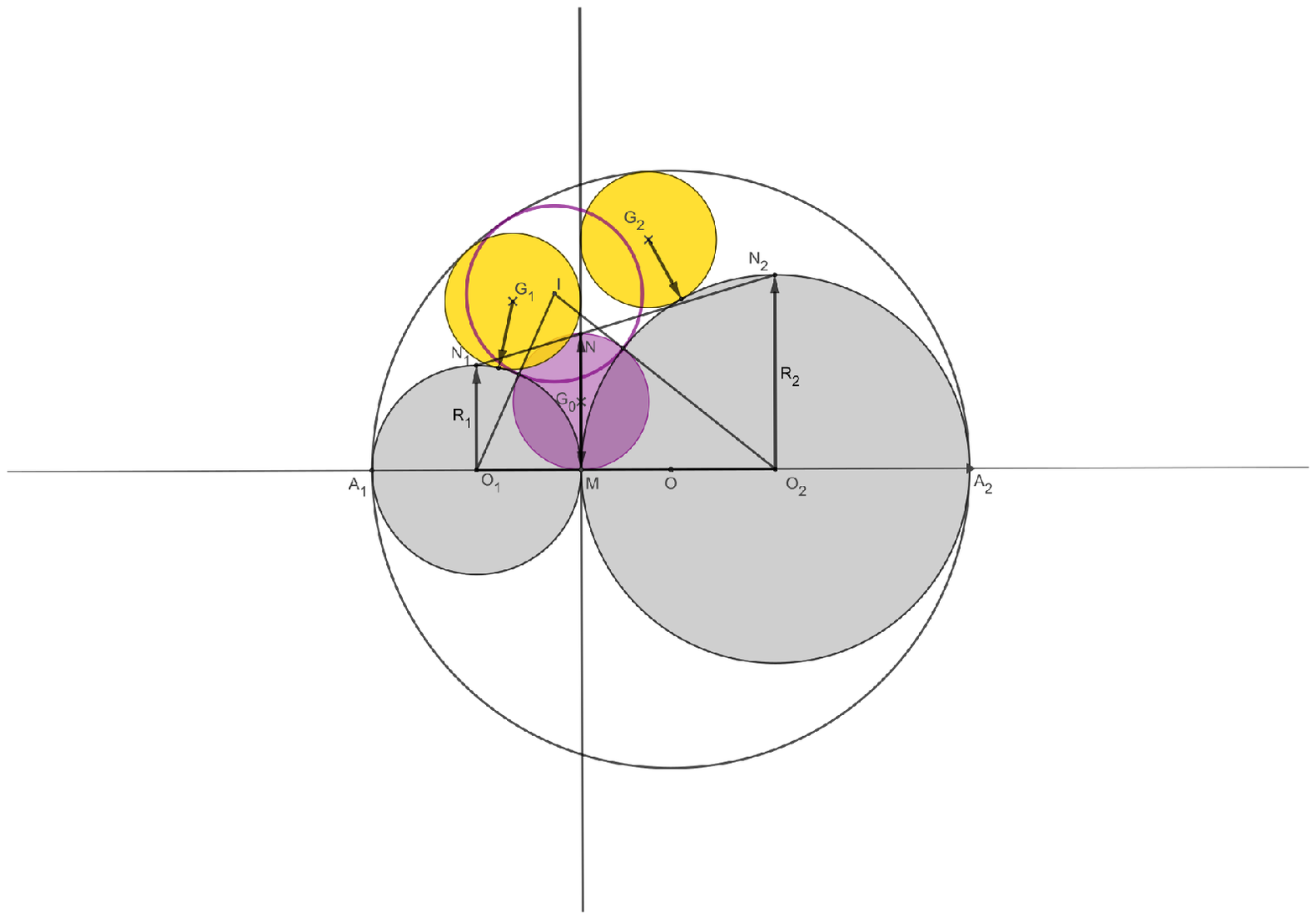}
\caption{An arquimedean circle (bordeaux) canonically attached to an arbelos; two other  circles  of same radius (solid yellow) can be drawn  tangent to  the arbelos' circles (grey) and to their common tangent; but in this case, why do they also the arbelos' external circle?}
\label{fig:isca2}
\end{figure}

\subsection*{Notations}
We shall note by $(O)$ a circle centered in $O$. By reflection in $(O)$ we mean the symmetry (or inversion) with respect to circle $(O).$ We freely use  "dual curve", "polar dual" or "reciprocal curve" as synonymous.

\section{Twins' centers and the Apolonius' problem}

Twins in arbelos are the two circles that  tangent the arbelos'  nested circles and their common (internal) tangent line; thus,
each twin is a solution of an Apollonius problem; hence, as in [G],  their centers can be obtained by intersecting some special conics.

Let a circle and a tangent line to it be given, as in figure \ref{fig:locus}.
\begin{lemma} The locus of the centers of the circles that tangent both the circle and the line is a parabola focused at  circle's  center  and whose vertex is at the tangency point of the circle and the line.
  \label{lema:parabola}
\end{lemma}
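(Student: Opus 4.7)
The plan is to recover the locus as a conic by invoking the focus--directrix characterisation of a parabola. Let $(O)$ denote the given circle, with radius $r$ and tangent to $\ell$ at the point $T$, so that $OT\perp \ell$ and $|OT|=r$.

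First I would fix a variable circle $(C)$ of radius $\rho$ tangent to $\ell$ and to $(O)$, and write the two tangency conditions:
\[
d(C,\ell)=\rho,\qquad |CO|=r+\rho,
\]
the second being the appropriate equation in the configuration of interest, namely external tangency to $(O)$ with $C$ on the same side of $\ell$ as $O$ (the tangency type relevant to the twins). Subtracting the two yields $|CO|-d(C,\ell)=r$, a relation that immediately suggests a translation of the line $\ell$.

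Next I would introduce the auxiliary line $\ell'$ parallel to $\ell$ at distance $r$, on the side of $\ell$ opposite to $O$. Since $C$ lies on the same side of $\ell$ as $O$, one has $d(C,\ell')=d(C,\ell)+r$, whence
\[
|CO|=d(C,\ell').
\]
This is precisely the focus--directrix property of the parabola with focus $O$ and directrix $\ell'$; the converse is immediate, because any point $C$ satisfying $|CO|=d(C,\ell')$ furnishes a valid radius $\rho:=|CO|-r>0$. The axis of this parabola is the perpendicular from $O$ to $\ell'$, that is, the line $OT$; its vertex is the midpoint of $O$ and the foot of the perpendicular from $O$ onto $\ell'$. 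Since $d(O,\ell')=2r$, this midpoint sits at distance $r$ from $O$ along $OT$, which is exactly $T$.

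The only obstacle I foresee is bookkeeping of the tangency type: the case $|CO|=r-\rho$ (internal tangency to $(O)$) collapses the locus to the segment $OT$ rather than to a genuine parabola, and the mirror configuration with $C$ on the opposite side of $\ell$ is entirely analogous and produces the symmetric parabola on the other side. In the write-up I would therefore be explicit about which configuration is meant, because the identification of the correct pair of parabolas is exactly what will be needed in the next step to locate the twins' centres by intersection.
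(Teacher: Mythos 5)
The paper gives no proof of this lemma (it is declared elementary and omitted), so there is no argument of the author's to compare against; your focus--directrix proof is correct and is surely the intended one. From $d(C,\ell)=\rho$ and $|CO|=r+\rho$ you correctly obtain $|CO|=d(C,\ell')$ for the line $\ell'$ parallel to $\ell$ at distance $r$ on the far side, and the vertex computation placing it at $T$ (midpoint of $O$ and the foot on $\ell'$, at distance $r$ along $OT$) is right.

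One inaccuracy in your closing remark: the mirror configuration with $C$ on the opposite side of $\ell$ does \emph{not} produce a symmetric parabola. In coordinates with $\ell$ the $x$-axis, $T=(0,0)$ and $O=(0,r)$, a center $(x,y)$ with $y<0$ and radius $-y$ that is externally tangent to $(O)$ satisfies $x^2+(y-r)^2=(r-y)^2$, i.e.\ $x^2=0$: since $\ell$ meets the closed disk of $(O)$ only at $T$, any such circle must touch $\ell$ at $T$ itself, so this branch degenerates to the line $OT$ exactly as the internal-tangency case does. The complete locus is therefore the single parabola $x^2=4ry$ together with the degenerate line $x=0$; this does not affect the lemma as stated or its use for the twins, but the write-up should not promise a second parabola.
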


Let $(O)$ and $(O_1)$ be two internally tangent circles, as in figure \ref{fig:locus}.
\begin{lemma} The locus of the centers of the
 circles that tangent two nested (internally) tangent  circles is an ellipse focused at the centers of the two nested circles  and passing through their common tangency point.
\label{lemma:elipse}
\end{lemma}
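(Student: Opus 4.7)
The plan is to translate the two tangency conditions into distance conditions on the center and observe that the radius drops out, leaving a constant-sum characterization of an ellipse. Write $R$ for the radius of $(O)$ and $R_1$ for that of $(O_1)$, so that internal tangency of the two nested circles gives $|OO_1|=R-R_1$ and the tangency point $T$ is collinear with $O,O_1$. For a test circle with center $P$ and radius $r$, contained in $(O)$ (internal tangency to $(O)$) and external to $(O_1)$ (external tangency to $(O_1)$) — this being the configuration relevant to the arbelos — the two tangency conditions read
\begin{equation*}
|OP|=R-r,\qquad |O_1P|=R_1+r.
\end{equation*}

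Adding these two relations eliminates $r$ and gives
\begin{equation*}
|OP|+|O_1P|=R+R_1,
\end{equation*}
a quantity that is constant and strictly greater than the inter-focal distance $|OO_1|=R-R_1$. Hence $P$ lies on the ellipse with foci $O$ and $O_1$ and semi-major axis $a=(R+R_1)/2$. Checking that the tangency point $T$ belongs to this locus is immediate: since $O_1$ lies between $O$ and $T$, one has $|OT|+|O_1T|=R+R_1$, and $T$ corresponds to the degenerate solution $r=0$ of the Apollonius problem.

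For the converse, given any point $P$ on the ellipse lying strictly inside $(O)$, define $r:=R-|OP|>0$; the ellipse equation forces $|O_1P|=R_1+r$, so the circle centered at $P$ with radius $r$ realizes both tangencies in the prescribed way. This establishes that the locus is precisely the whole ellipse (minus, possibly, $T$ itself, which one recovers as the limiting degenerate circle).

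The main obstacle is essentially bookkeeping: one has to fix unambiguously which tangency type is meant (internal to the outer, external to the inner), since the other configurations — e.g.\ a circle containing $(O_1)$ or externally tangent to $(O)$ — would give a \emph{difference} of distances instead of a sum, producing a branch of hyperbola rather than an ellipse. Once the right configuration is singled out, as dictated by the arbelos geometry, the proof reduces to the two-line computation above.
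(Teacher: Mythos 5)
Your proof is correct. The paper explicitly omits the proof of this lemma as ``elementary,'' and your argument --- translating the two tangency conditions into $|OP|=R-r$ and $|O_1P|=R_1+r$, adding to eliminate $r$, and checking the degenerate point $T$ and the converse --- is precisely the standard elementary argument the author had in mind, with the added care of singling out the tangency configuration relevant to the arbelos.
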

Both lemma \ref{lema:parabola} and   \ref{lemma:elipse} have elementary proofs that we omit.
When we specialize to   arbelos, we get a sharper result.
Refer to figure  \ref{fig:locus}.
\begin{proposition}
The center of each twin
is the intersection between an ellipse and a parabola.

Each ellipse has one vertex at the common
tangency point of each internal arbelos circle with the external one, (points $A_1$ and $A_2$ respectively), has  one focus in $O$ and the other focus in  $O_1$ and $O_2,$ respectively.
The vertex of the parabolas  is the common tangency point of the internal arbelos'  circles $(O_1)$, $(O_2)$  (point $M$) and their focus is in $O_1$ and $O_2,$ respectively.
\label{proposition:twin-generic}
\end{proposition}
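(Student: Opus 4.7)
The plan is to observe that each twin, being a circle tangent to three specific objects, must satisfy two pairs of tangency conditions to which the preceding lemmas apply directly. Consider say the left twin: it is tangent internally to the outer arbelos circle $(O)$, externally to the inner circle $(O_1)$, and to the common internal tangent line $\ell$ at $M$. Since $\ell$ is tangent to $(O_1)$ precisely at $M$, two of these tangency conditions can be grouped in two different ways, each governed by one of the lemmas.

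First I would pair the tangencies to $(O)$ and $(O_1)$. These two circles are internally tangent at $A_1$, so Lemma~\ref{lemma:elipse} applies and places the twin's center on an ellipse with foci $O$ and $O_1$ passing through $A_1$; this gives the desired ellipse with vertex at $A_1$. Next I would pair the tangencies to $(O_1)$ and the line $\ell$. Since $\ell$ is tangent to $(O_1)$ at $M$, Lemma~\ref{lema:parabola} applies and places the twin's center on a parabola with focus $O_1$ and vertex $M$. Since the twin's center lies on both loci, it must be an intersection point of the ellipse and the parabola. The symmetric argument applied to $(O_2)$ yields the analogous conics for the right twin.

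The main subtlety, and probably the only non-automatic point, is specifying \emph{which} intersection point to take: a generic ellipse and parabola meet in up to four points, corresponding to the several Apollonius solutions arising from the different combinations of internal/external tangency. I would settle this by observing that the tangency of the twin to $(O)$ must be internal (the twin lies inside the outer arbelos circle) and the tangency to $(O_1)$ must be external, which selects the branch of the ellipse lying inside $(O)$ and outside $(O_1)$; together with the parabola opening on the correct side of $\ell$, this singles out a unique intersection point on each side of the axis. The radius identification between the two twins is not required for this statement and is deferred to the subsequent analysis via polar reciprocity.
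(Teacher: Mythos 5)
Your argument is correct and is exactly the route the paper intends: the paper gives no separate proof of this proposition, treating it as the immediate specialization of Lemma~\ref{lemma:elipse} (applied to $(O)$ and $(O_i)$, tangent at $A_i$) and Lemma~\ref{lema:parabola} (applied to $(O_i)$ and the line $\ell$, tangent at $M$), which is precisely your decomposition. Your additional remark on selecting the correct intersection point among the several Apollonius solutions is a sensible refinement that the paper leaves implicit.
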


At this point, any drawing software is able to perform the  intersection of these two conics. Nevertheless,
 we are "not done", since the task is  to perform a geometric construction: a construction with a straight-line and a compass only. In a geometric construction, one cannot  intercept  "continuous curves",  other then circles and lines.


In fact,  while two arbitrary conics cannot be (geometrically) intersected, the conics that aroused here,  have a special feature: a common focus.
And here is where polar reciprocity comes into scene. The reader not acquainted with this topic, may see Appendix.

 \begin{lemma}
  The intersection  of two
 conics that have a common focus are the poles of the common tangents of their duals, w.r. to an inversion circle centered at their (common) focus.
 \label{lemma:int-conics}
 \end{lemma}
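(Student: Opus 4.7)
The plan is to use polar reciprocity with respect to the inversion circle centered at the common focus $F$ of the two conics, and to translate ``intersection of conics'' into ``common tangents of their duals'' via the standard pole-polar dictionary.

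First, I would recall (or cite from the Appendix) the basic machinery of polar reciprocity $\pi$ with respect to a circle centered at a point $F$: it is an involution exchanging points and lines of the plane, characterized by the incidence-reversing rule $P\in\ell\iff\pi(\ell)\in\pi(P)$. As a consequence, for any smooth curve $C$, one obtains the standard duality dictionary
\[
P\in C\ \Longleftrightarrow\ \pi(P)\text{ is tangent to } C^{*}:=\pi(C),
\]
simply because tangent lines to $C$ correspond to points of $C^{*}$ under $\pi$.

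Second, I would invoke the classical fact (which is the reason for choosing $F$ as the pole of reciprocity) that when the reciprocity circle is centered at a focus of a conic, the dual of the conic is a \emph{circle}. This can be read off directly from the focus–directrix equation: under $\pi$ the directrix is sent to a point, and that point turns out to be the center of the dual circle. Applying this to our situation, the two given conics $C_{1}$ and $C_{2}$, sharing the focus $F$, dualize to two honest circles $\gamma_{1}=\pi(C_{1})$ and $\gamma_{2}=\pi(C_{2})$.

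Third, I would combine the two ingredients. A point $P$ belongs to $C_{1}\cap C_{2}$ if and only if $\pi(P)$ is tangent to each of $\gamma_{1}$ and $\gamma_{2}$, i.e.\ if and only if $\pi(P)$ is a common tangent to the two circles $\gamma_{1}$, $\gamma_{2}$. Since $\pi$ is involutive, $P=\pi(\pi(P))$ is then precisely the pole (with respect to the chosen inversion circle at $F$) of that common tangent, which is the asserted statement.

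The main obstacle is not the logical chain, which is a straightforward application of the pole-polar dictionary, but rather making sure the reader has available the non-trivial fact that the polar reciprocal of a conic with respect to a circle centered at one of its foci is a circle. This is exactly what motivates the choice of the inversion center at the common focus and should be recorded (either explicitly in the Appendix or by citation) before the lemma is invoked.
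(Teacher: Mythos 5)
Your proposal is correct and follows essentially the same route as the paper, which justifies this lemma by combining the two Appendix facts: that the polar dual of a conic with respect to a circle centered at its focus is a circle, and that the intersection points of two curves are the poles of the common tangents of their duals. The paper leaves the incidence-reversal argument implicit, so your explicit write-up of the pole--polar dictionary is a welcome but not substantively different elaboration.
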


For more details on poles, polars and polar reciprocity, see [A], [S], [GSO].
  \begin{figure}
\centering
\includegraphics[trim=500 180 600 410,clip,width=0.9\textwidth]{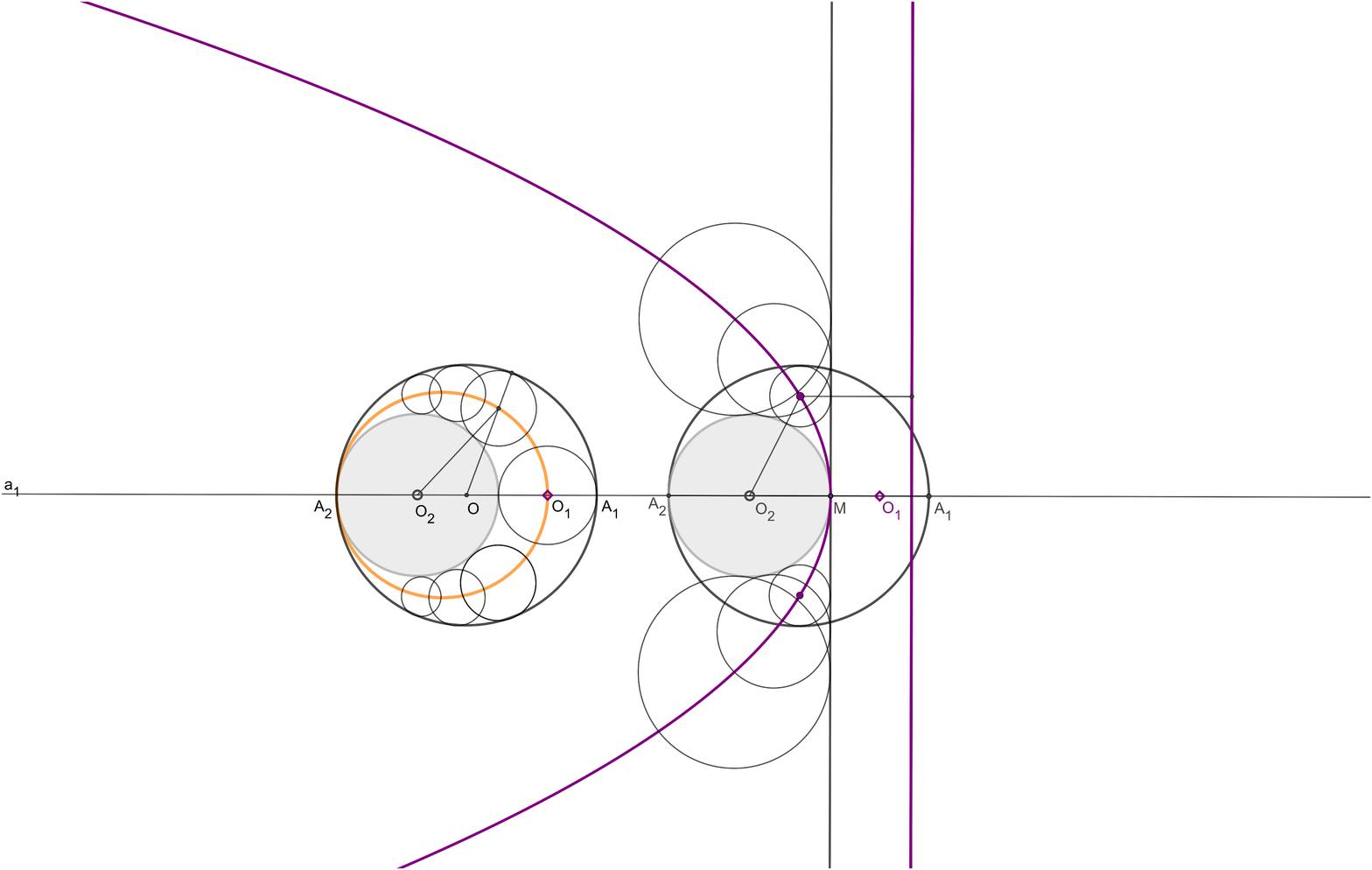}
\caption{I) {\bf (left)} The locus of the centers of the circles that tangents two internally tangent circles is an ellipse (orange) which has the foci at the centers of the two circles and one vertice at their common tangency point. II) {\bf (right)} The locus of the centers of the circles that tangents externally one circle (solid grey) and a  line (which tangent the circle) is a parabola (purple), which has the focus into the center of the  circle and one vertice at the common tangency point of the circle and the line.
}
\label{fig:locus}
\end{figure}

  \begin{figure}
\centering
\includegraphics[trim=400 240 400 200,clip,width=0.9\textwidth]{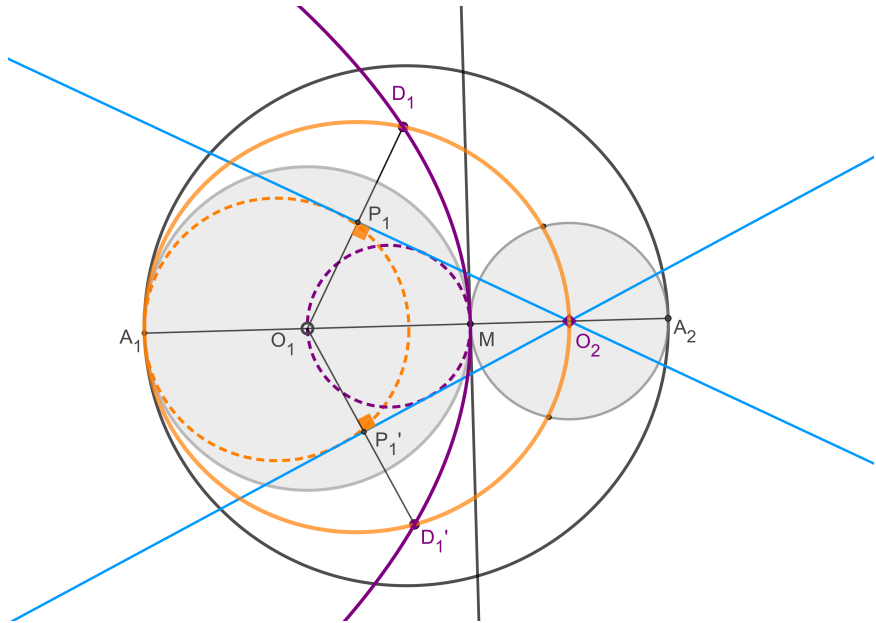}
\caption{I) The polar dual w.r. to $(O_1)$ of the ellipse focused in
$O_1$ and $O$ and passing through $A_1$
is a circle (dotted orange)
tangent at $A_1$ to $(O)$. Its diameter is $[A_1 O_2']$ where $O_2'$ is the reflection of $O_2$ in $(O).$
II) The polar dual  w.r. to $(O_2)$  of the parabola focused in $O_1$ and vertex $M$ is the circle of diameter $[O_2 M].$
III) The similitude center of these two circles is $O_2,$ the center of the second arbelos' circle.
IV) The two (real) intersection points  of the parabola and the ellipse are the poles
(w.r. to $(O_2)$)
of their common tangents :
shown is point $D_1,$
 the center of one of the arbelos' twins
}
\label{fig:rec}
\end{figure}

 \begin{figure}
\centering
\includegraphics[trim=300 100 300 50,clip,width=0.9\textwidth]
{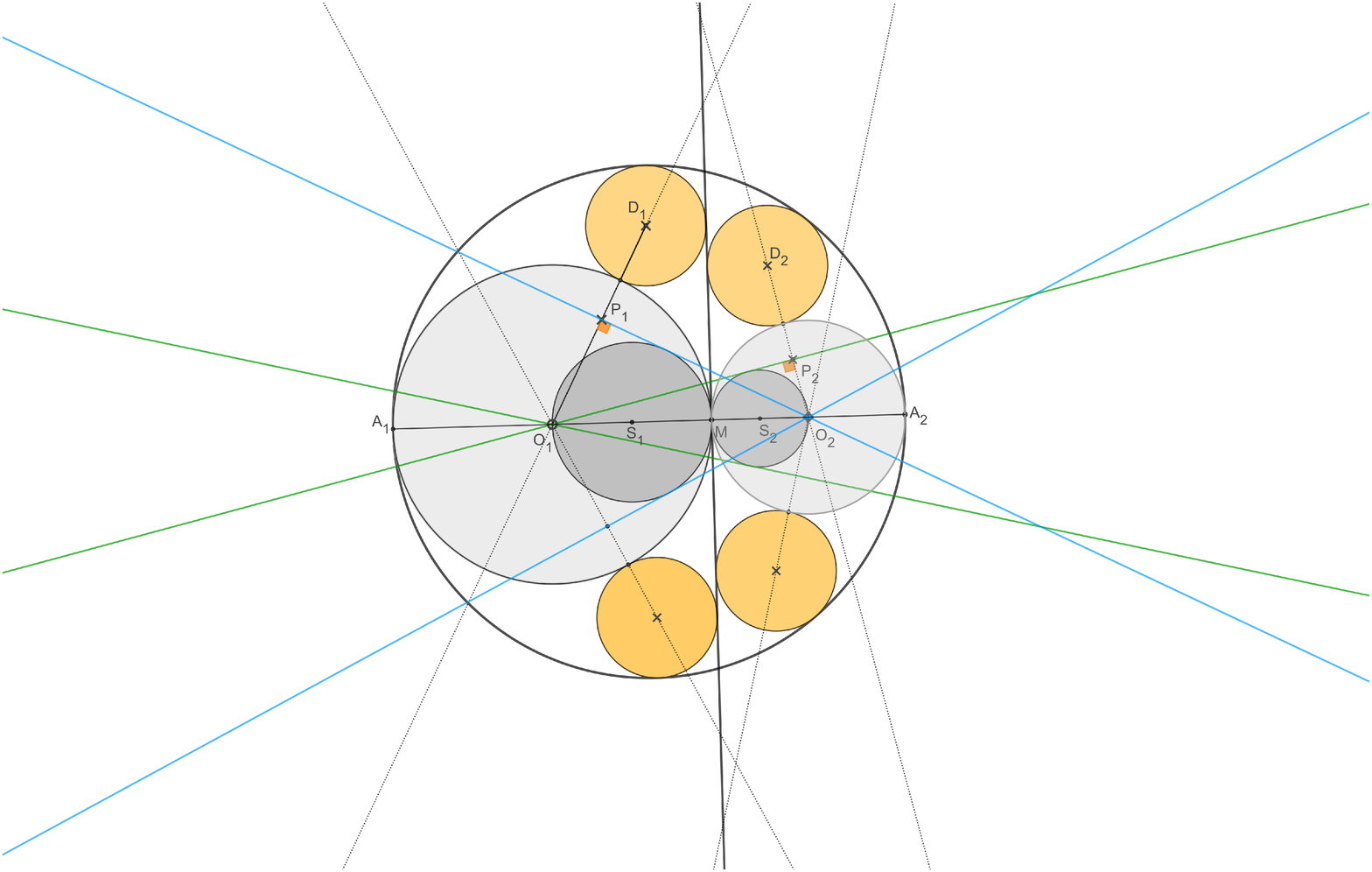}
\caption{Twins in arbelos: a solution via siblings,  tangents, and poles.}
\label{fig:isca}
\end{figure}

Now we may specify who these duals are.
Refer to figure \ref{fig:rec} and choose as the inversion circle $(O_1)$.
 \begin{lemma}
The polar duals (w.r. to $(O_1)$)   of  the ellipses $\mathcal {E}_1$ and the parabola
$\mathcal{P}_1$
in figure \ref{fig:rec}, are two circles that:
\begin{enumerate}
\item
are  tangent to $(O_1)$ at $A_1$ and  $M$ respectively;
\item the diameter of the parabola's dual is $[O_1 M]=R_1$
and the diameter of the ellipse's dual is $[A_1 O_2']= \frac{2R_1^2+R_1 R_2}{R_1+R_2} $
where $O_2'$ is the reflection of $O_2$ in $(O_1);$
\item the similitude centers of these two circles is $O_2.$
\end{enumerate}
 \label{lemma:homotetia}
 \end{lemma}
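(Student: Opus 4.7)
The plan is to invoke the classical result that the polar dual of a conic with focus $F$, taken with respect to any circle centred at $F$, is a circle; applied here with $F = O_1$ and inversion circle $(O_1,R_1)$, this reduces the lemma to identifying each dual circle by dualising two well-chosen tangents of the corresponding conic.

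For (i) and the parabola part of (ii), I would dualise the tangents to $\mathcal{P}_1$ at its two axial points: the finite vertex $M$ and the point at infinity along the axis, at which any parabola is tangent to the line at infinity. Since $M \in (O_1)$ and the tangent to $\mathcal{P}_1$ at $M$ is perpendicular to $O_1 M$, that tangent coincides with the tangent to $(O_1)$ at $M$, so its pole w.r.t.\ $(O_1, R_1)$ is $M$ itself; the pole of the line at infinity w.r.t.\ $(O_1, R_1)$ is $O_1$. Hence the dual circle passes through both $M$ and $O_1$, the tangent to the dual at $M$ (being the polar of $M$) coincides with the tangent to $(O_1)$ at $M$, and since the tangent to the dual at $O_1$ (the polar of the infinite point) is parallel to the first, the two contact points $M$ and $O_1$ are diametrically opposite on the dual. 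Thus $[O_1 M]$ is a diameter and the dual is tangent to $(O_1)$ at $M$.

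For the ellipse I would first observe that $|O_2 O_1| + |O_2 O| = (R_1+R_2) + R_1 = 2a$, so $O_2$ is itself the second major vertex of $\mathcal{E}_1$, opposite $A_1$. Dualising the tangents at the two major vertices $A_1$ and $O_2$ (both perpendicular to the axis) by the same principle -- the tangent at $A_1 \in (O_1)$ has pole $A_1$, while the tangent at $O_2$ has pole equal to the inverse $O_2'$ of $O_2$ in $(O_1)$ -- shows that the dual is the circle with diameter $[A_1 O_2']$, tangent to $(O_1)$ at $A_1$, whose length is $|A_1 O_1| + |O_1 O_2'| = R_1 + R_1^2/(R_1+R_2) = (2R_1^2 + R_1 R_2)/(R_1+R_2)$.

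For (iii), both diameters lie on the arbelos axis, so $C_P$, $C_E$ and $O_2$ are collinear. Setting $\lambda = (2R_1+R_2)/(R_1+R_2)$, it remains to verify that the homothety centred at $O_2$ with ratio $\lambda$ sends $O_1 \mapsto A_1$ (immediate from $|O_2 A_1| = 2R_1 + R_2 = \lambda |O_2 O_1|$, same sense on the axis) and $M \mapsto O_2'$ (from $|O_2 O_2'| = R_2(2R_1+R_2)/(R_1+R_2) = \lambda |O_2 M|$). This homothety therefore maps the dual of $\mathcal{P}_1$ onto the dual of $\mathcal{E}_1$, so $O_2$ is their external similitude centre. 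The main conceptual step throughout is to recognise that each dual circle is pinned down by dualising the two axial tangents of its conic, since those tangents are perpendicular to the common axis and hence have poles that become diametrically opposite on the dual; once that is in place, the lemma reduces to short arithmetic with the arbelos radii.
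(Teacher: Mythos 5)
Your proposal is correct and follows essentially the same route as the paper: both invoke the fact that the dual of a conic with respect to a circle centred at a focus is a circle whose diameter joins the inverses of the two (axial) vertices, identify the diameters as $[O_1M]$ and $[A_1O_2']$, and then confirm the similitude centre $O_2$ by a direct metric check along the common axis. Your version is in fact slightly more complete, since you justify that $O_2$ is the second vertex of $\mathcal{E}_1$ and handle the parabola's vertex at infinity explicitly, where the paper simply cites its Appendix.
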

 \begin{proof}
 The proof uses known facts on the polar of a conic, that we collect in  Appendix. Both the parabola and the ellipse in figure \ref{fig:rec} have one common focus in $O_1;$ therefore, their duals w.r. to $(O_1)$ are circles.
 The dual of a parabola w.r. to an inversion circle centered at its focus is a circle, whose  diameter is
 $[O_1 M]$, where $M$ is
 the reflection of the parabola's vertice and  $O_1$ is the  center of inversion. Since the vertice $M$ is located on the inversion circle, its is invariant by  reflection; thus, the  dual of the parabola is a circle with diameter $[O_1M]=R_1.$

 The ellipse focused in $O_1$ and $O$, and passing through $A_1$ has its second  vertice at $O_2.$
 Therefore, its polar dual w.r. to  $(O_1)$
 is a circle whose diameter is $[A_1 O_2']$, where $O_2'$ is the reflection of $O_2$ in $(O_1)$; the vertice $A_1$  is invariant, since it is  a point of the inversion circle. Thus, the diameter  of the dual is
 \begin{equation}
 A_1 O_2'=A_1 O_1+O_1O_2'=R_1+\frac{R_1^2  }{R_1+R_2}=
 \frac{2R_1^2+R_1 R_2}{R_1+R_2}.
 \label{eq:homot1}
\end{equation}
Let $o_1$ and $S_1$ be the centers of these dual  circles; their radius are, respectively
\begin{equation}
r_1=\frac{A_1M}{2}=
 \frac{2R_1^2+R_1 R_2}{2(R_1+R_2)} \;\;\mbox{ and}\;\;
 r_m=\frac{R_1}{2}.
 \label{eq:raiosrec1}
 \end{equation}

 In order to prove that the similitude center is $O_2,$ we prove that
 \begin{equation}
\frac{o_1 O_2}{S_1 O_2}=\frac{r_1}{r_m}.
\label{eq:homot2}
\end{equation}

In fact, \[S_1O_2=\frac{R_1}{2}+{R_2}=\frac{R_1+2R_2}{2}\] and
\[o_1 O_2=A_1O_2-A_1o_1=2R_1-\frac{R_1(2R_1+ R_2)}{2(R_1+R_2)} =\frac{(2R_1+R_2)(R_1+2R_2)}{2(R_1+R_2)};          \] hence
\[\frac{o_1 O_2}{S_1 O_2}=
\frac{(2R_1+R_2)}{(R_1+R_2)};\]
\[
\frac{r_1}{r_m}= \frac{R_1(2R_1+R_2)}{2(R_1+R_2)}\cdot \frac{2}{R_1}=\frac{(2R_1+R_2)}{(R_1+R_2)},    \]
hence equation \ref{eq:homot2} is verified.

 \end{proof}

We may attach to any arbelos  a pair of tangent circles, "the siblings:" these are two circles, mutually tangent  at the common tangency points of the arbelos circles, and of half radius each.

The results proved above justifies the following new and sharp construction of the centers of the twins;
refer to figure \ref{fig:isca}.
\begin{theorem}
The centers of the twins are
 the poles of the tangents drawn from the center of a arbelos' circle, to the opposite sibling.
 \label{thm:construction}
\end{theorem}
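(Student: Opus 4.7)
The plan is to assemble the three preceding lemmas into a single pipeline that converts the conic-intersection description of a twin center into a ruler-and-compass operation. Focus first on the twin whose center $D_1$ is furnished by Proposition~\ref{proposition:twin-generic} as the intersection of the ellipse $\mathcal{E}_1$ (foci $O$, $O_1$; vertex $A_1$) and the parabola $\mathcal{P}_1$ (focus $O_1$; vertex $M$). Both conics share the focus $O_1$, so Lemma~\ref{lemma:int-conics} applies with inversion circle $(O_1)$: the point $D_1$ is the pole, with respect to $(O_1)$, of a common tangent line of the polar duals $\mathcal{E}_1^\ast$ and $\mathcal{P}_1^\ast$.

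Next I would substitute the explicit identification provided by Lemma~\ref{lemma:homotetia}. The dual $\mathcal{P}_1^\ast$ is the circle of diameter $[O_1 M]$, and this is exactly the sibling attached to $(O_1)$ in the definition immediately preceding the theorem; the dual $\mathcal{E}_1^\ast$ is the circle of diameter $[A_1 O_2']$. Crucially, the same lemma identifies $O_2$ as the center of similitude of these two circles. A standard property of similitude centers is that any line through such a center which is tangent to one of the two circles is automatically tangent to the other: this is because a homothety centered at $O_2$ maps $\mathcal{P}_1^\ast$ onto $\mathcal{E}_1^\ast$, preserves tangency, and fixes every line through $O_2$. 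Hence the tangents drawn from $O_2$ to the sibling $\mathcal{P}_1^\ast$ are precisely common tangents of $\mathcal{P}_1^\ast$ and $\mathcal{E}_1^\ast$.

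Combining the two steps, the pole w.r.t.\ $(O_1)$ of a tangent from $O_2$ to the sibling of $(O_1)$ is an intersection point of $\mathcal{E}_1$ and $\mathcal{P}_1$; selecting the tangent on the appropriate side of the arbelos axis picks out the center $D_1$ of the twin. The entire argument is symmetric under the interchange $(O_1,R_1)\leftrightarrow (O_2,R_2)$, which yields the other twin center as the pole, w.r.t.\ $(O_2)$, of the tangent from $O_1$ to the sibling of $(O_2)$. The only subtlety is the orientation bookkeeping needed to match each of the two common tangents from the similitude center to the correct twin, but this is a routine check rather than a genuine obstacle, since all the geometric content is already packaged in Lemmas~\ref{lemma:int-conics} and \ref{lemma:homotetia}.
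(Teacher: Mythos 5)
Your proposal is correct and follows exactly the route the paper intends: the paper offers no written proof of Theorem~\ref{thm:construction}, merely asserting that Proposition~\ref{proposition:twin-generic}, Lemma~\ref{lemma:int-conics} and Lemma~\ref{lemma:homotetia} ``justify'' it, and your text is precisely that assembly, with the one implicit bridging fact (tangents from the similitude center to one circle are common tangents of both) correctly stated and justified via the homothety.
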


  \section{Twins' radii}

At this point, we draw the twins' centers  using polar reciprocity. This does not guarantees  that  the circles are arquimedean.
Fortunately,
the method is proper for computation purposes, as well.
Refer to figure \ref{fig:isca}.
\begin{lemma}
 Let $(O_1)$ and $(O_2)$ the (internal) arbelos circles and let $(S_1)$ and $(S_2)$,  be their siblings: two circles  whose  diameters are  $[O_1M]$ and $[O_2M]$.
 Let $O_1T_2$ the tangent from $O_1$ to circle $(S_2);$
 let $D_2$ be its pole w.r. to $(O_2)$; construct similarly $D_1.$
  Then \[O_2D_2-R_2=O_1D_1-R_1.\]
 \label{lemma:radius}
\end{lemma}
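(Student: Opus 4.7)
The plan is to compute $O_2 D_2$ directly from the construction in Theorem \ref{thm:construction} and then observe that the resulting expression for $O_2 D_2 - R_2$ is symmetric in $R_1$ and $R_2$; the lemma is immediate once that is established, since the analogous construction at the other sibling gives the same symmetric quantity for $O_1 D_1 - R_1$.

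First I would set up coordinates with $M$ at the origin, $O_1=(-R_1,0)$ and $O_2=(R_2,0)$, so that the sibling $(S_2)$ is the circle of center $(R_2/2,0)$ and radius $R_2/2$. In this setup the tangent $\ell := O_1T_2$ from $O_1$ to $(S_2)$ makes an angle $\alpha$ with the line of centers determined by
\begin{equation*}
\sin\alpha \;=\; \frac{R_2/2}{|O_1 S_2|} \;=\; \frac{R_2}{2R_1+R_2}.
\end{equation*}
The perpendicular distance from $O_2$ to $\ell$ is therefore
\begin{equation*}
d_2 \;=\; |O_1 O_2|\,\sin\alpha \;=\; \frac{R_2(R_1+R_2)}{2R_1+R_2}.
\end{equation*}

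Next, by Theorem \ref{thm:construction}, $D_2$ is the pole of $\ell$ with respect to the circle $(O_2)$ of radius $R_2$. The standard pole–polar distance relation gives $O_2 D_2 = R_2^2/d_2$, whence
\begin{equation*}
O_2 D_2 \;=\; \frac{R_2(2R_1+R_2)}{R_1+R_2}, \qquad O_2 D_2 - R_2 \;=\; \frac{R_1 R_2}{R_1+R_2}.
\end{equation*}
Since the right-hand side is symmetric in $R_1,R_2$, carrying out the verbatim computation for $D_1$ (tangent from $O_2$ to $(S_1)$, pole taken with respect to $(O_1)$) produces the same value $R_1 R_2/(R_1+R_2)$ for $O_1 D_1 - R_1$, yielding the claimed equality.

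There is no real obstacle: the entire argument is a short computation bridging the polar construction and the elementary pole formula. The only thing to keep an eye on is the orientation in that formula — one must check that the pole lies on the $O_1$-side of $\ell$ along the perpendicular from $O_2$, so that $O_2D_2 = R_2^2/d_2$ rather than its negative. Once that sign is settled, the symmetry of $R_1 R_2/(R_1+R_2)$ in the two indices collapses the proof to a single line and, as a bonus, identifies the common value $O_i D_i - R_i$ with the arquimedean radius (half the harmonic mean of $R_1$ and $R_2$), which is the real content of the lemma.
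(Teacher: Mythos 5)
Your computation is correct and follows essentially the same route as the paper: you determine the distance from $O_2$ to the tangent line (via $\sin\alpha$, where the paper uses the equivalent similar triangles $\triangle O_1O_2P_2\sim\triangle O_1S_2T_2$), apply the pole relation $O_2P_2\cdot O_2D_2=R_2^2$, and conclude by the symmetry of $\tfrac{R_1R_2}{R_1+R_2}$, exactly as in the paper's proof of equation \ref{eq:raio-gemeo}. The only cosmetic differences are your use of full radii $R_i$ instead of the paper's half-radii $r_i$, and your (unnecessary) appeal to Theorem \ref{thm:construction} for the definition of $D_2$, which is already part of the lemma's hypothesis.
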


\begin{proof} Let $r_1,r_2$ be half of the radius $R_1,R_2$. By construction,
\[\triangle{O_1 O_2 P_2}\sim \triangle{O_1 S_2 T_2},\] hence
\[\frac{r_2}{O_2P_2}=\frac{2r_1+r_2}{2r_1+2r_2},
\;\;\; \mbox{or}\;\;\; O_2P_2=   \frac{2r_2(r_1+r_2)}{2r_1+r_2}. \] Since
$D_2,$ the pole of the line $O_1 T_2$
is the reflection of $P_2,$ the projection of $O_2$  to  the line $O_1 T_2$, with respect to  $(O_2).$
\[O_2 P_2\cdot  O_2 D_2= 4r_2 ^2,\] hence
\[O_2 D_2=\frac{4r_2^2}{O_2P_2}=
\frac{4r_2^2(2r_1+r_2)}{2r_2(r_1+r_2)}=\frac{2r_2(2r_1+r_2)  }{ r_1+r_2 } .\]
This gives
   \[ O_2 D_2-R_2=O_2 D_2-2r_2=\frac{2r_2(2r_1+r_2)  }{ r_1+r_2 }-2r_2= 2r_2\bigg[\frac{(2r_1+r_2)}{r_1+r_2}-1\bigg],\] hence

 \begin{equation}
r=O_2D_2-R_2=\frac{2r_1\cdot r_2}{r_1+r_2}.
\label{eq:raio-gemeo}
\end{equation}
Formula \ref{eq:raio-gemeo} is symmetric with respect to $r_1$ and $r_2;$
an identical computation, obtained by switching the indices $1$ and $2,$ ends the proof.

\end{proof} Lemma \ref{lemma:int-conics} and Lemma \ref{lemma:radius}, prove the  result on the twins;
refer to figure \ref{fig:isca}.
\begin{theorem}
The circles centered in $D_1,D_2$ and of radius $r$ are arquimedean twins in arbelos: they are tangent to both $(O_1)$ $(O_2)$
 and $(O)$, and also are tangents to the line $l$ and their  (common) radius $r$ is
\[\frac{1}{r}=
\Big[ \frac{1}{R_1}+  \frac{1}{R_2}\Big] , \] where $R_1,R_2$ denotes the radii of the arbelos' circles.
\label{thm:radius-arbelos}
\end{theorem}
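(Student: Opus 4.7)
The plan is to bundle the preceding lemmas into the two claims of the theorem: the tangency to $(O_1), (O_2), (O), \ell$ follows from the conic-locus properties once we know each $D_i$ lies on the right ellipse and parabola, and the radius formula is a direct corollary of Lemma \ref{lemma:radius}. Fixing $\{i,j\}=\{1,2\}$, the ellipse $\mathcal{E}_i$ (foci $O, O_i$, passing through $A_i$) and the parabola $\mathcal{P}_i$ (focus $O_i$, vertex $M$) share the focus $O_i$, which is what allows polar reciprocity (Lemma \ref{lemma:int-conics}) to turn the analytic intersection $\mathcal{E}_i\cap\mathcal{P}_i$ into the geometric pole construction of Theorem \ref{thm:construction}. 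So $D_i$ simultaneously lies on $\mathcal{E}_i$ and on $\mathcal{P}_i$.

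From $D_i \in \mathcal{E}_i$, Lemma \ref{lemma:elipse} says the circle centred at $D_i$ of radius $O_iD_i - R_i$ is tangent externally to $(O_i)$ and internally to $(O)$; from $D_i \in \mathcal{P}_i$, Lemma \ref{lema:parabola} says the circle centred at $D_i$ of the same radius $O_iD_i - R_i$, which equals the distance from $D_i$ to $\ell$, is tangent externally to $(O_i)$ and to $\ell$. Consequently the single circle of radius $r := O_iD_i - R_i$ centred at $D_i$ touches three of the four curves, namely $(O_i), (O)$ and $\ell$; the parallel argument at $D_j$ supplies the tangency to $(O_j)$, and the pair of twins collectively touches all four curves as asserted.

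For the radius itself, Lemma \ref{lemma:radius} has already done the heavy lifting, giving
\[
r \;=\; O_iD_i - R_i \;=\; \frac{2 r_1 r_2}{r_1+r_2}, \qquad r_k=R_k/2.
\]
Substituting $r_k=R_k/2$ yields $r = \dfrac{R_1R_2}{R_1+R_2}$, equivalently $\dfrac{1}{r} = \dfrac{1}{R_1}+\dfrac{1}{R_2}$. The symmetry of this expression in $R_1,R_2$, already the punchline of Lemma \ref{lemma:radius}, is exactly what forces the radii computed from $D_1$ and from $D_2$ to coincide; that symmetry is the genuine obstacle in the argument, and it has been absorbed entirely into the algebraic manipulation of Lemma \ref{lemma:radius}. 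Once it is in hand there is no further work: the two circles are congruent and have the Archimedean radius, so they are the classical twins.
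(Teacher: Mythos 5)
Your proposal is correct and follows essentially the same route as the paper: identify $D_1,D_2$ as the conic intersections via the polar-reciprocity lemma, read off the tangencies to $(O_i)$, $(O)$ and $\ell$ from the ellipse/parabola locus lemmas, and obtain the common radius from Lemma \ref{lemma:radius} with $r_k=R_k/2$. You merely spell out the tangency deductions (and the correct ``collective'' reading of the tangency to both inner circles) that the paper's proof leaves implicit, which is a welcome clarification rather than a deviation.
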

\begin{proof}
The fact that the points $D_1$ and $D_2$ are the centers of the twins is guaranteed by Lemma \ref{lemma:int-conics}. Thus, the circle centered in $D_2$ and whose radius  is   $D_2O_2-R_2$ tangents the line $l,$ as well, hence these are the twins. The relation \ref{eq:raio-gemeo} ends the proof.
\end{proof}

  \begin{figure}
\centering
\includegraphics[trim=290 150 240 150,clip,width=0.9\textwidth]{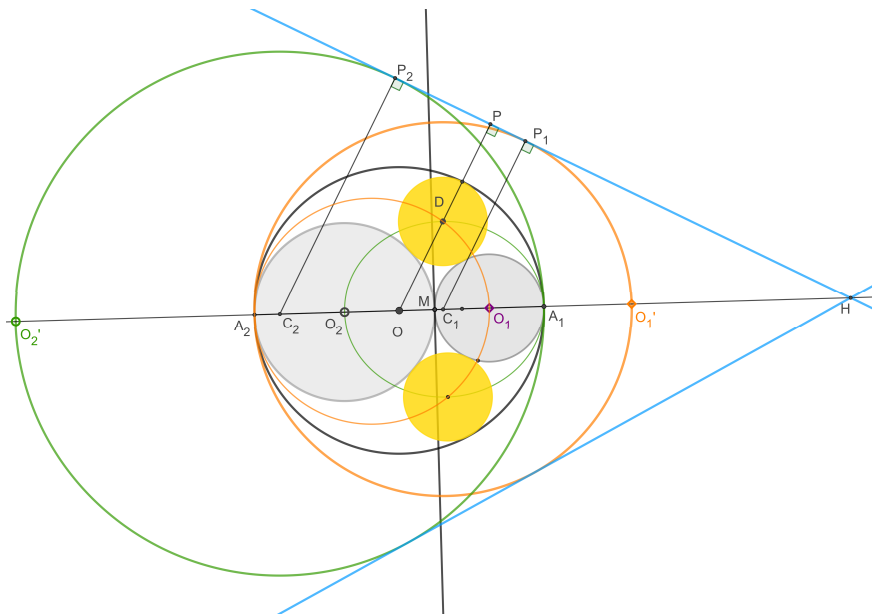}
\caption{
The center of the arbelos' i-circle as intersection of two ellipses:
one focused in $O$ and $O_1$ and passing through $O_2$, (green) and the other focused in $O$ and $O_2$ and passing through $O_1.$
Their intersection is the pole w.r. to $(O)$ of the common tangent (blue) to their reciprocal circles (orange and green circles).
}
\label{fig:circtg}
\end{figure}

\subsection*{Arbelos i-circle} For the sake of completeness, and because is  effortless,
we show how  may draw  the  arbelos' i-circle
$\mathcal{I}$, the circle that  tangents the three arbelos' circles.
\begin{theorem}
Refer to figure \ref{fig:circtg}.
\begin{enumerate}
    \item
The center of $\mathcal{I}$  is obtainable as the  intersection of two ellipses:
one focused in $O$, and $O_1$, and axis $[O_2A_1]$  and the other focused in
$O$ and $O_2$ and axis $[O_1A_2]$.
\item The intersection points of these ellipses are  the poles   of the common tangents  to their reciprocal circles, w.r. to $(O)$.
\item The radius of the arbelos i-circle is
$$R=\frac{R_1R_2(R_1+R_2)}{R_1^2+R_1R_2+R_2^2}$$.

\end{enumerate}

\end{theorem}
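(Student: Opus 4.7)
The plan is to prove the three claims sequentially, reusing the polar-reciprocity machinery already developed for the twins.

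For~(i), I would apply Lemma~\ref{lemma:elipse} twice. The arbelos i-circle $\mathcal{I}$ is internally tangent to $(O)$ and externally tangent to both $(O_1)$ and $(O_2)$. Since $(O_1)$ is nested in and tangent to $(O)$ at $A_1$, Lemma~\ref{lemma:elipse} says the locus of centers of circles tangent to both is an ellipse $\mathcal{E}_1$ with foci $O,O_1$ passing through $A_1$. For any such tangent circle of radius $\rho$ one has $|CO|+|CO_1|=(R_1+R_2-\rho)+(R_1+\rho)=2R_1+R_2$, and a direct check shows that $O_2$ also satisfies this sum, identifying it as the second vertex; thus the major axis of $\mathcal{E}_1$ is $[A_1 O_2]$. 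Symmetrically, $\mathcal{E}_2$ has foci $O,O_2$ and axis $[A_2 O_1]$. The center of $\mathcal{I}$ lies on both.

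For~(ii), the ellipses $\mathcal{E}_1, \mathcal{E}_2$ share the focus $O$, so Lemma~\ref{lemma:int-conics} applies verbatim: reciprocating with respect to an inversion circle centered at $O$ sends each $\mathcal{E}_i$ to a circle (the inversion is centered at a focus), and the intersection points of $\mathcal{E}_1, \mathcal{E}_2$ are the poles of the common tangents of those two reciprocal circles. This yields the straight-edge-and-compass recipe of Figure~\ref{fig:circtg}, exactly parallel to Theorem~\ref{thm:construction}.

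For~(iii), I would extract $R$ from the two focal-sum equations together with the collinearity data. Place $O=(0,0)$, $O_1=(-R_2,0)$, $O_2=(R_1,0)$, $C=(x,y)$; then the tangency conditions read $|CO|=R_1+R_2-R$, $|CO_1|=R_1+R$, $|CO_2|=R_2+R$. Subtracting $|CO|^2$ from $|CO_1|^2$ and from $|CO_2|^2$ eliminates $y$ and gives two linear expressions for $x$ in terms of $R$; equating them and clearing denominators yields
\[R\,(R_1^2+R_1R_2+R_2^2) = R_1R_2(R_1+R_2),\]
which is the stated formula.

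The main obstacle is organisational rather than technical: one must keep track of the signs of the tangencies (external to $(O_1), (O_2)$, internal to $(O)$) when rewriting them as focal sums, and verify that this single linear relation is indeed the full content of the three nonlinear tangency equations, so that a consistent $y$ exists. A one-line alternative bypassing the coordinate work is Descartes' Circle Theorem applied to the curvatures $1/R_1,\,1/R_2,\,-1/(R_1+R_2),\,1/R$; in this arbelos quadruple the cross-term $ab+bc+ca$ vanishes identically, so the quadratic collapses to $1/R = 1/R_1 + 1/R_2 - 1/(R_1+R_2)$, which rearranges into the claimed radius.
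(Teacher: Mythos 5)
Your proposal is correct, but part~(iii) takes a genuinely different route from the paper. For~(i) and~(ii) you do essentially what the paper does (and slightly more carefully: the paper never verifies that $O_2$ is the second vertex of $\mathcal{E}_1$, whereas your focal-sum check $|CO|+|CO_1|=2R_1+R_2$ together with $|O_2O|+|O_2O_1|=R_1+(R_1+R_2)$ nails it down). For the radius, however, the paper stays inside the polar-reciprocity framework: it dualizes both ellipses with respect to $(O)$ into circles of diameters $[A_1O_2']$ and $[A_2O_1']$, locates their similitude center $H$, uses the similar triangles $\triangle HC_1P_1\sim\triangle HOP\sim\triangle HC_2P_2$ to compute the distance $OP$ from $O$ to the common tangent, and then recovers the center via the pole relation $OD\cdot OP=(R_1+R_2)^2$ and the radius as $R=(R_1+R_2)-OD$. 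You instead bypass the duals entirely and eliminate $y$ from the three tangency equations by subtracting squared distances; I checked the resulting linear relation and it does yield $R(R_1^2+R_1R_2+R_2^2)=R_1R_2(R_1+R_2)$. Your Descartes alternative is also valid: for the arbelos curvature triple $1/R_1,\,1/R_2,\,-1/(R_1+R_2)$ the sum $k_1k_2+k_2k_3+k_3k_1$ does vanish, so the quadratic degenerates to $1/R=1/R_1+1/R_2-1/(R_1+R_2)$ (geometrically, the two Apollonius solutions are mirror images across the baseline, hence the double root). What the paper's method buys is that the radius falls out of the very same construction that produces the center with straightedge and compass, which is the point of the article; what your method buys is independence from the reciprocity machinery and a much shorter verification of the metric claim. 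Your only loose end --- confirming that a consistent $y$ exists --- is harmless, since the existence of the i-circle is already guaranteed by the conic intersection in parts~(i)--(ii).
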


\begin{proof}
The two ellipses have a common focus in $O$ and tangent the inversion circle $(O)$ at their vertices $A_1$ and $A_2,$ respectively. Hence
their duals are two circles of diameters
$[A_1 O_2']  $ and $[A_2 O_1']$ respectively, where
$O_1'$ is the reflection of $O_1$ in $(O)$ and
$O_2'$ is the reflection of $O_2$ in $(O)$.

Let $H$ be the  similitude center of the reciprocal circles.

Then \[ \triangle{H C_1 P_1}\sim\triangle{H O P}\sim\triangle{H C_2 P_2}.\] Hence
\[OP=\frac{(R_1+R_2)(R_1^2+R_1R_2+R_2^2)        }{ R_1^2+R_2^2} .\]
Since $D$ is the pole of $HP,$
\[OD\cdot OP=(R_1+R_2)^2       .\]
The radius of the i-circle is \[ R=(R_1+R_2)-OD=(R_1+R_2)-\frac{(R_1+R_2)^2}{OP}       \]
which gives
\[R=\frac{R_1R_2(R_1+R_2)}{R_1^2+R_1R_2+R_2^2}.\]

\end{proof}



 \begin{figure}
\centering
\includegraphics[trim=80 100 0 40,clip,width=1.0\textwidth]{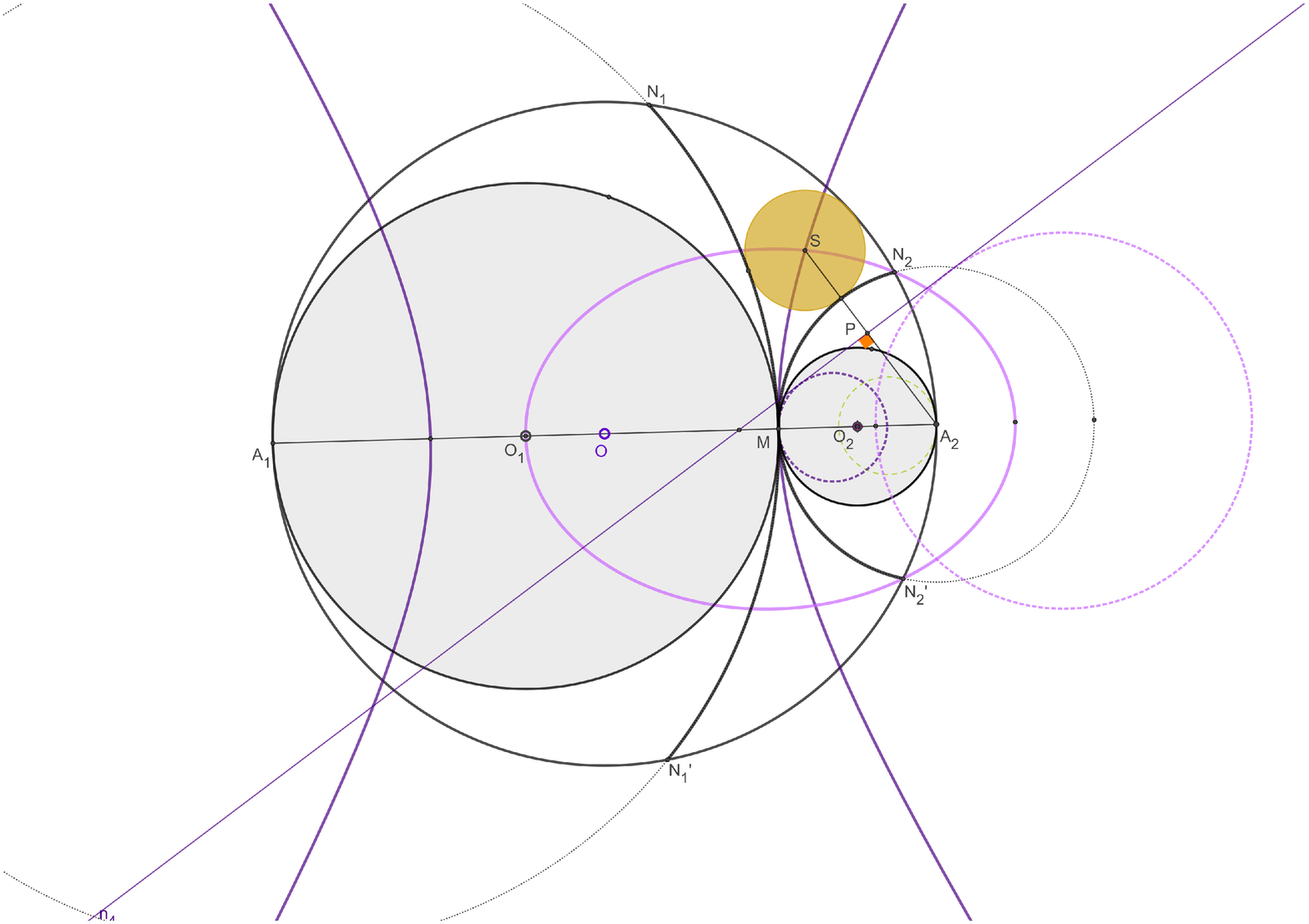}
\caption{ The cousin i-circle  (golden) tangents internally $(O)$ and externally $(A_1)$ and $(A_2)$. Its center obtains as intersection of an ellipse (violet) and a hyperbola, focused in $A_2,O$ and $A_1,A_2,$ respectively. $S$ is the pole of the common tangent at their reciprocal circles, w.r. to   $(A_2);$ its radius is the same as those of the classic twins:
$ \frac{1}{s}= \big[\frac{1}{R_1}+\frac{1}{R_2}\big]. $
}
\label{fig:icirc-cousin}
\end{figure}

\section{ Arquimedean circles in doubling arbelos}
We now associate to a classic arbelos, two other circles, passing through
the common tangency points $M$ and   centered at each of  the end-point of arbelo's diameter; we call it a doubling arbelos; see figure \ref{fig:icirc-cousin}  and \ref{fig:final}.

By   i-circle in a doubling arbelos, we mean the circle that tangents externally the two new-added circles and internally the arbelos diametral circle. Such circle  was first spoted by  Scotch, in [S], who proved that  it is  arquimedean; above, we indicate another proof for this fact, and also a construction for the center of this circle, both based on polar reciprocity.

\begin{theorem}
The  i-circle in a doubling arbelos is arquimedean:
$ \frac{1}{s}= \big[\frac{1}{R_1}+\frac{1}{R_2}\big]. $
\label{thm:cousins}
\end{theorem}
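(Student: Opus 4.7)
The plan is to mirror the polar-reciprocity argument used in the preceding theorem for the classical arbelos i-circle, adapted to the doubling configuration. Let $s$ denote the unknown radius and $S$ the center of the i-circle in the doubling arbelos. The three tangency conditions read
\[|SA_1| = s+2R_1,\qquad |SA_2| = s+2R_2,\qquad |SO| = (R_1+R_2)-s.\]
Eliminating $s$ from the last two yields $|SA_2|+|SO|=R_1+3R_2$, so $S$ lies on an ellipse $\mathcal{E}$ with foci $A_2,O$. Eliminating $s$ from the first two yields $\bigl||SA_1|-|SA_2|\bigr|=2|R_1-R_2|$, so $S$ also lies on a hyperbola $\mathcal{H}$ with foci $A_1,A_2$. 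Hence $S=\mathcal{E}\cap\mathcal{H}$.

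Since $\mathcal{E}$ and $\mathcal{H}$ share the focus $A_2$, Lemma \ref{lemma:int-conics} applies: taking the added doubling circle $(A_2)$ of radius $2R_2$ as inversion circle, the point $S$ is the pole (w.r.t.\ that circle) of a common tangent to the polar duals $\mathcal{E}^{*}$ and $\mathcal{H}^{*}$. Both duals are circles, because each conic is being dualized through one of its foci. I would compute them exactly as in Lemma \ref{lemma:homotetia}: invert the two axis-vertices of each conic through $(A_2,2R_2)$ to obtain an antipodal pair on each dual circle, from which the diameters and centers---all lying on the line $A_1A_2$---follow immediately. A short calculation then locates their similitude center on this line and so pins down the relevant common tangent.

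With the common tangent in hand, the radius is obtained exactly as in Lemma \ref{lemma:radius}: drop the perpendicular from $A_2$ to that tangent, let $P$ be its foot, use similar triangles induced by the homothety to compute $|A_2P|$, and invoke the pole--polar relation
\[|A_2 S|\cdot|A_2 P|=(2R_2)^2\]
to extract $|A_2S|$. Then $s=|A_2S|-2R_2$, and the algebra should collapse to $s=\dfrac{R_1R_2}{R_1+R_2}$, equivalently $\tfrac{1}{s}=\tfrac{1}{R_1}+\tfrac{1}{R_2}$.

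The main obstacle, in contrast with the two-ellipse setup of the preceding theorem, is the hyperbola $\mathcal{H}$: one must commit to the correct branch (the one on which $S$ actually sits, determined by the sign of $R_1-R_2$), keep track of the direction in which its vertices invert through $A_2$, and identify which of the (up to four) common tangents of $\mathcal{E}^{*}$ and $\mathcal{H}^{*}$ yields the i-circle rather than one of the other members of the Apollonius family. Once this orientation bookkeeping is settled, the remainder is a mechanical rerun of Lemmas \ref{lemma:homotetia} and \ref{lemma:radius}.
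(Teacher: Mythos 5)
Your proposal is correct and follows essentially the same route as the paper: the same ellipse (foci $O$, $A_2$) and hyperbola (foci $A_1$, $A_2$), dualization with respect to the circle centered at the common focus $A_2$, and recovery of $S$ as the pole of a common tangent to the two dual circles, with the radius extracted via the pole--polar relation. If anything you are slightly more explicit than the paper, which likewise leaves the final computation as ``straightforward'' and does not address the branch/tangent-selection bookkeeping you flag.
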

\begin{proof} Refer to figure \ref{fig:icirc-cousin}.
 We foresee the i-circle associated to a doubling arbelos' as a
 solution of an Apolonius' problem; therefore, its center $S,$ can be obtained as an intersection between an ellipses focused in $O$ and $A_2,$ passing through $O_1$ and a hyperbola focused in $A_1$ and $A_2,$ and passing through $M.$ Since these two conics have a common focus in $A_2,$ a polar dual w.r. to $(A_2)$ maps them into a pair of circles, whose diameters are the the reflection of their (ellipses and hyperbola's) vertices.
 $S,$
 the center of the  i-circle in a doubling arbelos is the pole of the common tangent to their reciprocal circles. Straightforward computations, similar to those in the proof of
 theorem \ref{thm:radius-arbelos} provide an give  proof  to the Scotch's result.

 These method embeds the geometric construction of $S$, as a pole of the common tangents to these reciprocal circles.

\end{proof}
Now we consider two new circles associated to a doubling arbelos:
the twin-cousins are  the circles that tangents externally the two arbelo's circles of  diameters $[A_1 A_2]$ and $[A_1 M]$, as well as internally the arbelos external circle and one of the (new) circles centered at $A_1$ and of radii $A_1 M$ or $A_2$ and of radii $A_2M;$ see figure \ref{fig:final} (bottom side).

The twin-cousins (solid blue circles) are not congruent; nevertheless  they verify an arquimedean-type metric relation.

\begin{theorem}
Let $s_1,s_2$ be the radius of the twin-cousins; then
\[\frac{1}{s_1}+\frac{1}{s_2}=3\big[ \frac{1}{R_1}+\frac{1}{R_2}     \big]\]
\label{thm:cousin-arq}
\end{theorem}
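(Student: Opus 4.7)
The plan is to apply the polar-reciprocity construction used for the classical twins (Lemma~\ref{lemma:homotetia} and Theorem~\ref{thm:radius-arbelos}) to each cousin separately, derive each radius in closed form, and then verify the identity by summing reciprocals.

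I would first focus on the cousin of radius $s_1$, the one involving the new circle $(A_1)$. Viewed as an Apollonius configuration, its centre is determined by three tangency conditions: external tangency to $(O_1)$, internal tangency to $(O)$, and internal tangency to $(A_1)$. Two applications of Lemma~\ref{lemma:elipse} then place the centre on two ellipses: the ellipse $\mathcal{E}_1$ with foci $O_1,A_1$ and focal sum $3R_1$ (from the pair $(O_1),(A_1)$, internally tangent at $M$), and the ellipse $\mathcal{E}_2$ with foci $O_1,O$ and focal sum $2R_1+R_2$ (from the pair $(O_1),(O)$, internally tangent at $A_1$). Crucially, both ellipses share the focus $O_1$.

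Next I would apply polar reciprocity with respect to a circle centred at $O_1$, as in Lemma~\ref{lemma:homotetia}: each $\mathcal{E}_i$ dualises to a circle $\mathcal{E}_i^{\ast}$ whose diameter joins the reflections of the vertices of $\mathcal{E}_i$ through the chosen inversion circle. By Lemma~\ref{lemma:int-conics}, the cousin's centre is the pole (with respect to that inversion circle) of a common tangent of $\mathcal{E}_1^{\ast}$ and $\mathcal{E}_2^{\ast}$. Repeating the similar-triangles/projection argument of Lemma~\ref{lemma:radius}, now with two proper ellipses instead of an ellipse and a parabola, should give the closed form
\[
s_1 \;=\; \frac{R_1R_2}{R_1+2R_2}.
\]
By the $R_1\leftrightarrow R_2$ symmetry, the other cousin will have radius $s_2 = R_1R_2/(2R_1+R_2)$.

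Adding reciprocals then yields
\[
\frac{1}{s_1}+\frac{1}{s_2}
=\frac{R_1+2R_2}{R_1R_2}+\frac{2R_1+R_2}{R_1R_2}
=\frac{3(R_1+R_2)}{R_1R_2}
=3\left(\frac{1}{R_1}+\frac{1}{R_2}\right),
\]
which is the claimed identity. The main obstacle should be the bookkeeping in the dualisation step: since both conics are now proper ellipses, both non-common foci (namely $A_1$ and $O$) must be reflected through the inversion circle and carried through the similitude argument of Lemma~\ref{lemma:radius}. Once the two dual circles are correctly identified, the pole of their common tangent is a short geometric construction and the residual algebra is routine.
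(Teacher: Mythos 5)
Your proposal follows essentially the same route as the paper: the centre $S_1$ is cut out by exactly the same two ellipses (foci $O,O_1$ through $A_1$, and foci $O_1,A_1$ through $M$), dualised with respect to a circle centred at the common focus $O_1$, the pole of the common tangent of the two dual circles gives $S_1$, and your closed forms $s_1=R_1R_2/(R_1+2R_2)$ and $s_2=R_1R_2/(2R_1+R_2)$ agree with equations (\ref{eq:cousin1}) and (\ref{eq:cousin2}). One small correction to your closing remark: in the dualisation it is the \emph{vertices} of each ellipse (as you correctly state earlier) that are reflected through the inversion circle to produce the diameters of the dual circles, not the non-common foci $A_1$ and $O$.
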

 \begin{figure}
\centering
\includegraphics[trim=240 70 90 30,clip,width=1.0\textwidth]{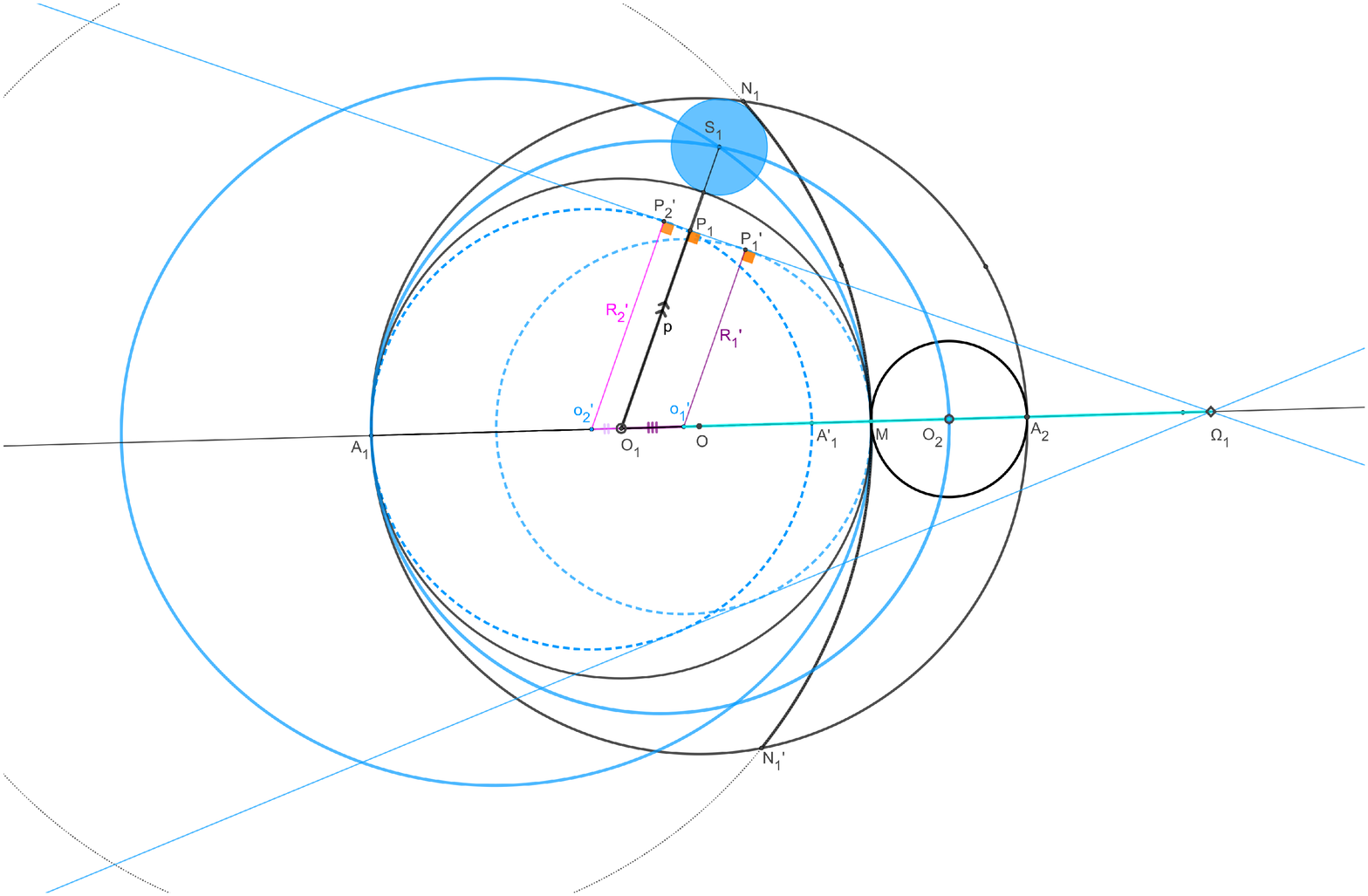}
\caption{$S_1,$ the center of a twin-cousin, obtained as intersection of two ellipses, as well as a pole of the common tangent at their dual circles (dotted, centered at $o_1'$
and $o_2'$, respectively.
}
\label{fig:lemacousin}
\end{figure}

\begin{figure}
\centering
\includegraphics[trim=200 100 150 50,clip,width=1.0\textwidth]{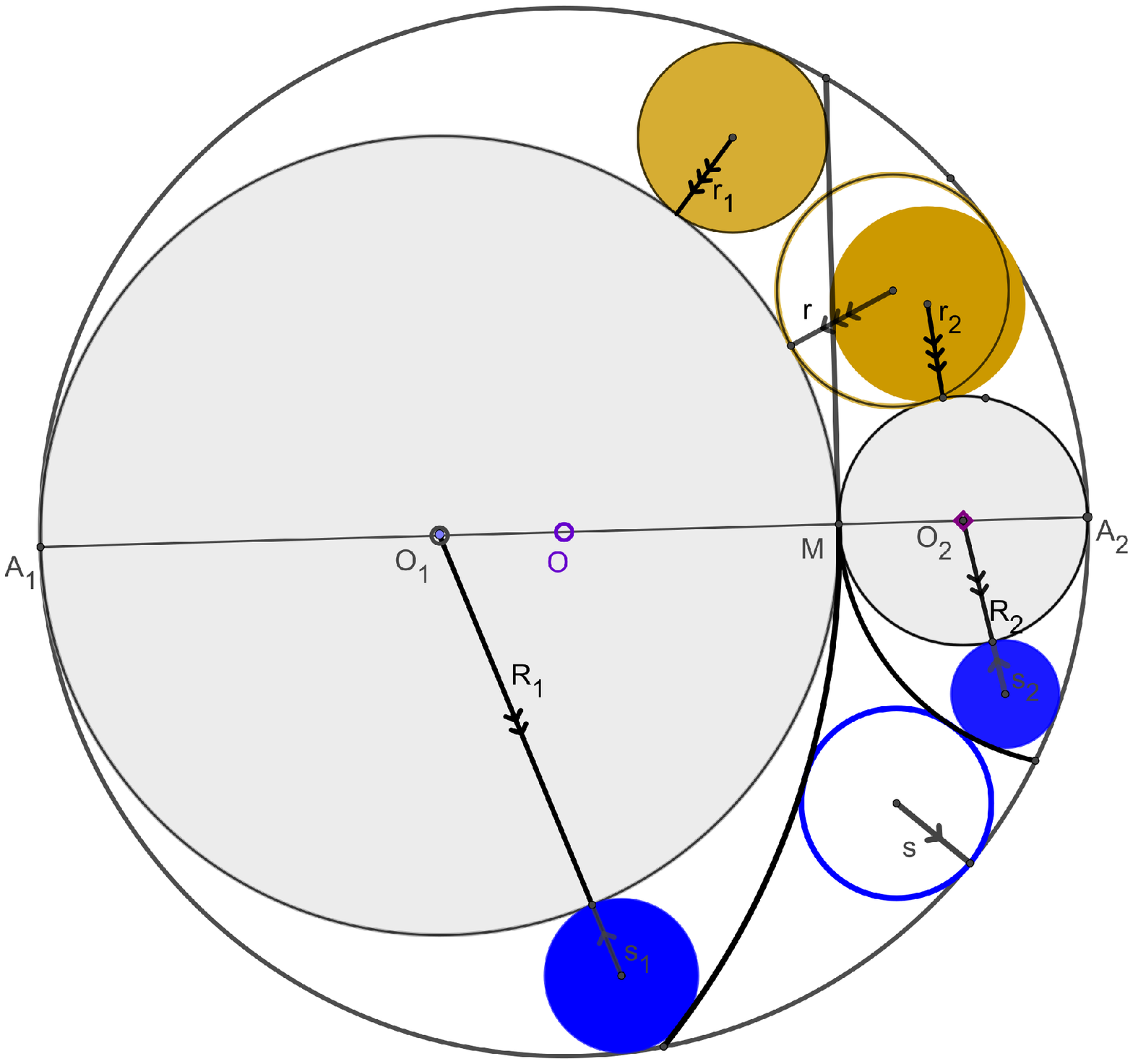}
\caption{
Metric coincidences in arbelos:
I) The (classic)  twins (solid golden) and the cousin i-circle (blue) are congruent and  arquimedean: $s=r_1=r_2$ and  $\frac{1}{s}=\frac{1}{R_1}+\frac{1}{R_2}$. II)
The two cousin-circles (solid blue)
verifies
$\frac{3}{s}=\big[\frac{1}{s_1}+\frac{1}{s_2}\big]$ hence $ \frac{1}{s_1}+\frac{1}{s_2} = 3\big[\frac{1}{R_1}+\frac{1}{R_2} \big]$
}
\label{fig:final}
\end{figure}

\begin{proof}
Refer to figure \ref{fig:lemacousin}. The center of  twin cousin $S_1$,  obtains as intersection of two ellipses:
$\mathcal{E}_1$ the ellipses focused in $O$ and $O_1$, and passing through $A_1,$
and another ellipses,
$\mathcal{E}'$, focused in $O_1$ and $A_1$, and passing through $M.$
Perform a dual transform w.r. to $(O_1)$;
since $O_1$ is the common focus of the ellipses $\mathcal{E}_1$ and $\mathcal{E}',$
their duals are two circles. The dual of $\mathcal{E}_1$ is the circle $\mathcal{C}'_2$ of diameter
$[A_1 O_2'],$ where $O_2'$ is the reflection of $O_2$ in $(O_1)$; denote by $o_2'$ and $R_2'$ be its center and radius.
Similarly,
 the dual of $\mathcal{E}'$ is the circle $\mathcal{C}'_2$ of diameter
$[B_1' M],$ where $B_1'$ is the reflection of the ellipse's vertice $B_1$  in $(O_1)$; denote by let $o_1'$ and $R_1'$ be its center and radius.
The common tangent, $p$ of these dual circles intercept the line of centers in $\Omega_1,$
their similitude center.
Let $P_1,P_1',P_2'$ denote respectively the projections of $O_1,o_1',o_2'$ on the tangent $p.$

Since
$\mathcal{E}'$ the ellipses focused in $O_1$ and $A_1$, and passing through $M,$ and since
$O_1 B_1=2\cdot R_1$,
\[O_1 B_1\cdot O_1 B_1'=R_1^2,\;\;\;O_1 B_1'=\frac{R_1}{2};   \]
therefore,
\begin{equation}
R_1'=\frac{3R_1}{4},\;\;\; O_1 O_1'=\frac{R_1}{4}.
\end{equation}
Now we find the segment $O_1 O_2'$.
The diameter $A_1A_1'$ of the dual circle of the ellipses $\mathcal{E}'$ is
\[A_1A_1'=O_1A_1+O_1A_1'=R_1+\frac{R_1^2}{R_1+R_2}=\frac{2R_1^2+R_1R_2}{R_1+R_2};    \]
hence
\begin{equation}
    R_1'=O_1A_1'=\frac{R_1^2}{R_1+R_2}
    \label{eq:$R_1'$}
    \end{equation}
Finally,
\[O_1O_2'=\frac{A_1A_1'}{2}-O_1A_1'=
\frac{2R_1^2+R_1R_2}{2(R_1+R_2)}-
\frac{R_1^2}{(R_1+R_2)}=\frac{R_2^2}{2(R_1+R_2)}.\]
Thus,
\[O_1 O_2'=O_1 A_1-O_2'A_1=R_1-\frac{A_1 A_1'}{2}= R_1-\frac{2R_1^2+R_1 R_2}{2(R_1+R_2)} . \]
In other words, \[O_1 O_1'=O_1 M-O_1'M=R_1-R_1'=
\frac{R_1 R_2}{2(R_1+R_2)}.
\]
and \begin{equation}
O_1' O_2'=O_1O_2'+O_1O_1'= \frac{R_1 R_2}{2(R_1+R_2)}+\frac{R_1}{4}=\frac{R_1^2+3R_1R_2}{4(R_1+R_2)}
\end{equation}
Now let $x=\Omega_1 O_1'$ and $p=O_1 P_1;$
by hypothesis,
\[\triangle{\Omega O_1'P_1'}\sim \triangle{\Omega O_1P_1}\sim
\triangle{\Omega O_2'P_2'}.\]
Using the relations above, we obtain
\[x=\frac{3R_1(R_1+3R_2)}{4(R_1-R_2)},\;\;\;
p=\frac{R_1(R_1+2R_2)}{(R_1+3R_2)}
.\]
Hence
\[O_1'O_2'=\frac{R_1^2+3R_1R_2}{4(R_1+R_2)},\;\;
R_1'=\frac{3R_1}{4},\;\;
R_2'=\frac{2R_1^2+R_1R_2}{2(R_1+R_2)}.
\]
With these ingredients in place, a straightforward computations led to
the radius of the  cousin-twin centered at $S_1$
\begin{equation}
    s_1=\frac{R_1 R_2}{R_1+2R_2}.
 \label{eq:cousin1}
\end{equation}
If we interchange the indices $1$ and $2$, we obtain
\begin{equation}
    s_2=\frac{R_1 R_2}{R_2+2R_1}.
    \label{eq:cousin2}
    \end{equation}
Thus
\[\frac{1}{s_1}+\frac{1}{s_2}=
3\big[\frac{1}{R_1}+\frac{1}{R_2}\big].\]

\end{proof}



\section*{Appendix. Brief recall on polar reciprocity}
We include here a
brief recall on polar reciprocity.

Let us fix
$C(\Omega,R)$  a circle centered in  $\Omega$ and of radius  $R,$
which we shall call inversion circle.

\begin{definition}
If $p_0$
is a line that does not pass through $\Omega,$
its pole  is the inverse of the projection of the center $\Omega,$ on the line $p_0.$
\end{definition}

\begin{definition}
If $P_0$
is a point $(P_0\neq \Omega),$
the polar of  $P_0$
is the perpendicular line on
$\Omega P_0,$  that pass through $P_1,$ the inverse of $P_0.$
\end{definition}
\begin{definition}
The polar
dual (or a reciprocal curve) of a regular curve (w.r. to an inversion circle) defines as
the curve whose points are the poles of the tangents of the original curve.
\end{definition}

When we perform the dual of a  circle, w.r. to an inversion
circle, we obtain conics.

\begin{A}(see e.g. [S], art. 306 and 309) The dual
	of a circle
	$\gamma=C(O,r),$ w.r. to an inversion circle
	${C(\Omega,R)},$  is a conic, ${\Gamma};$
	if   $d$ denotes the distance between the centres of the reciprocated and inversion circles, $d=\Omega O,$ then:
	
	i) $\Gamma$ is an ellipse, if  $r<d;$
	
	ii) $\Gamma$ is a parabola,  if $r=d;$
	
	iii) $\Gamma$ is a hyperbola, if  $r>d.$

	Moreover, (one of) the
	the focus of the dual conic
	$\Gamma$  is  precisely $\Omega,$ the center of the inversion circle;
	its directrix   is the polar of $O,$ the center of the reciprocated circle
	and the eccentricity  is
	$e=\frac{r}{d}.$

\end{A}

This theorem has a very useful counterpart.

\begin{A}
	The dual of a conic $\Gamma,$ w.r. to an inversion circle centered
	into its focus,  is a circle, $\gamma.$
	The symmetric of the vertices of the conic $\Gamma,$
	are a pair of diametrically opposite points of the dual  circle, $\gamma.$
The pole of the directrix of $\Gamma,$
	is the center of the circle $\gamma.$
	\label{thm:dualcurve}
\end{A}
All the geometrical elements of the dual conic,
can be drawn with straight-line and compass, since all the steps involves drawing the symmetric of a point and the pole of a line.

A final  useful fact regards intersections of (regular) curves.
\begin{A}
The intersection of two curves are the poles of their common tangents.
\label{thm:intersection}
\end{A}

\section* {Bibliography}
[AZ] {A. V. Akopyan and A. A. Zaslavsky},
{\it Geometry of Conics},
{Amer. Math. Soc.},
{Providence, RI},
{2007}
[Ba]  Bankoff,L.,{\it Are the twin circles of Archimedes really twins?}, Mathematics Magazine, 47: 214–218.
[B] Bergsten, C.,{\it
Magic Circles in the arbelos}
TMME, vol7, n.2-3, 209-222, ISSN 1551-3440

[D-L] Danneels, E., van Lamoen, F. : Midcircles and the arbelos. Forum
Geometricorum 7 (2007), 53-65.

[G]  Gheorghe, L.G. \emph{Apollonius' problem: in pursuit of a natural solution}, Int.J.Geom.,{\bf 9} (2) {\bf(2020)}, 39--51.

[G-S-O]  Glaeser,G., Stachel, H., Odehnal, B., \emph{The universe of Conics}, Springer Specktrum, Springer-Verlang Berlin Heidelberg, 2016.

[L-W] van Lamoen, F. and Weisstein, E. W. : "Pappus Chain." From MathWorld - A
Wolfram Web Resource.
[ http://mathworld.wolfram.com]

[O] Oller-Marcen´,A.,{\it The f-belos}
{\it Forum Geometricorum}
Volume 13 (2013) 103–-111.

[O-W] H. Okumara and M. Watanabe, The twin circles of Archimedes in a skewed arbelos, Forum
Geom., 4 (2004) 229–251.
[O-W] H. Okumura and M. Watanabe, A generalization of Power’s Archimedean circle, Forum Geom.,
6 (2006), 103–105.

[P] Poncelet, J.V.,   \emph {
Traité de propriétés
projectives des figures}, Gauthier-Villars, Paris,
1866

[S] Salmon, G.,\emph{ A treatise on conic sections}, Longman, Green, Reader and Dyer, London, 1869.

[S] Schoch, T.: “arbelos: The Woo circles”.
[http://www.retas.de/thomas

/arbelos/woo.html ]

[We] Weisstein, E. W.: "arbelos." From MathWorld - A Wolfram Web Resource.
[ http://mathworld.wolfram.com/arbelos.html ]

[W]  Woo, P., {\it Simple Constructions of the Incircle of an arbelos},
Forum Geometricorum
Volume 1 (2001) 133–-136.

\bigskip
 \bigskip
 \bigskip

DEPARTAMENTO DE MATEMÁTICA

UNIVERSIDADE FEDERAL DE PERNAMBUCO

RECIFE, (PE) BRASIL

\textit{E-mail address}:

\texttt{liliana@dmat.ufpe.br}
\bigskip
\bigskip
\end{document}